\title{The Single Ring Theorem and a Question of Shub}
\author{Joshua Paik} 
\date{May 2025}
\definecolor{muuve}{rgb}{0.467, 0.3,0.52}
\declaretheorem[numberwithin=section]{theorem} 
\declaretheorem[sibling=theorem]{corollary}
\declaretheorem[sibling=theorem]{conjecture}
 \newcommand{\cB}{\mathcal{B}} 
            \newcommand{\cP}{\mathcal{P}}          
\renewcommand{\epsilon}{\varepsilon}
\newcommand{\C}{\mathbb{C}}
\newcommand{\E}{\mathbb{E}}
\newcommand{\N}{\mathbb{N}}
\renewcommand{\O}{\mathrm{SO}}
\renewcommand{\P}{\mathbb{P}}
\newcommand{\R}{\mathbb{R}}
\newcommand{\U}{\mathrm{SU}}
\newcommand{\SO}{\mathrm{SO}}
\newcommand{\SL}{\mathrm{SL}}
\begin{document}

% % % % % % % % % % % % % % 
% % % A B S T R A C T % % %
% % % % % % % % % % % % % % 

\begin{abstract}
    Given an orthogonally invariant probability measure on $GL(d,\R)$, Mike Shub asked whether the average product of the $k$ top eigenvalues in the ensemble can be lower bounded by the average distortion along $k$ dimensional Grassmanians. Recently, \cite{armentanorandom} provided partial progress, however they attach a constant $c_{d,k} \to 0$ as $d \to \infty$.  In this paper, by invoking the Single Ring Theorem \cite{guionnet2011single} \cite{guionnet2012support}, we show the conjecture asymptotically for the spectral radius, in particular, $c_{d,k} \to 1$ as $d \to \infty$, and $k = 1$. 
\end{abstract}

\maketitle

\section{Introduction}
The following is a question of Mike Shub.\footnote{Over $GL(d,\R)$, this is a question that appears (as a comment) in Dedieu--Shub \cite{dedieu2003mike} and a survey article (as a question) of Burns, Pugh, Shub, Wilkinson \cite{burns2001recent}.} Recall that a uniform random $k$--dimensional Grassmanian in $\C^d$, denoted $g_{k}$, can be represented as the span of the first $k$--columns of a Haar random $U \in \U(d)$ or $\O(d)$, and we denote this random $d \times k$ rectangular matrix as $U_k$. Then for a square matrix $A$, define $\det A|g_k := \det (AU_k)^*(AU_k)$. 

\begin{conjecture}[\cite{dedieu2003mike},\cite{burns2001recent}] \label{conjecture}
     Let $A \in GL(d,\R) \setminus \R I$. Then
     \begin{enumerate}
         \item $$\int_{\SO(d)} \prod \limits_{i=1}^k |\lambda_i (UA )| \ d\nu(U) \geq c_{d,k} \int_{\mathrm{Grass}(d,k)} \det A|g_{k} \ d\mu(g_{k}),$$
         \item $$\int_{\SO(d)} \sum \limits_{i=1}^k \log |\lambda_i (UA )| \ d\nu(U) \geq c_{d,k} \int_{\mathrm{Grass}(d,k)} \log \det A|g_{k} \ d\mu(g_{k})\text{ and,}$$
         \item One can choose $c_{d,k} = 1$ for all $d$ and $k$. 
     \end{enumerate}
\end{conjecture}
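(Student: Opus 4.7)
The plan is to prove part (2) with $c_{d,k}=1$ first and then derive parts (1) and (3) from it. The key identity comes from the Schur decomposition $UA = V T V^*$, ordered so that $|T_{ii}| = |\lambda_i(UA)|$ is nonincreasing: writing $V_k$ for the first $k$ columns of $V$, one has
\[
\prod_{i=1}^k |\lambda_i(UA)|^2 \;=\; \det(UAV_k)^*(UAV_k) \;=\; \det A|V_k(UA),
\]
where the last equality uses $U^*U = I$. Thus $\sum_{i=1}^k \log|\lambda_i(UA)| = \tfrac{1}{2} \log \det A|V_k(UA)$, and part (2) with $c_{d,k}=1$ becomes the stochastic-comparison statement
\[
\int_{\mathrm{Grass}(d,k)} \log \det A|g \ d\mu_{\mathrm{Schur}}(g) \;\geq\; 2 \int_{\mathrm{Grass}(d,k)} \log \det A|g \ d\mu_{\mathrm{Haar}}(g),
\]
where $\mu_{\mathrm{Schur}}$ is the pushforward of Haar measure on $\SO(d)$ under the map $U \mapsto V_k(UA)$. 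Part (1) would then follow from (2) by Jensen's inequality applied to $\exp$, so the entire Conjecture reduces to proving this single logarithmic stochastic domination between two natural Grassmannian measures.

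The first concrete step is to compute the Radon--Nikodym density $w_A = d\mu_{\mathrm{Schur}}/d\mu_{\mathrm{Haar}}$. Parametrising $U = V T V^* A^{-1}$ and computing the Jacobian of this map on $\SO(d)$ (an orthogonal-group analogue of the classical complex Schur-decomposition integration formula) yields a density on $(V,T)$ involving a Vandermonde in the $|T_{ii}|^2$ and a factor of $|\det A|^{-d}$, together with an integration over the strictly upper-triangular modulus degrees of freedom of $T$. Integrating out $T$ and the columns of $V$ complementary to $V_k$ produces an explicit formula for $w_A$, at which point the problem reduces to the purely analytic question
\[
\int w_A(g)\, \log \det A|g \ d\mu_{\mathrm{Haar}}(g) \;\geq\; 2 \int \log \det A|g \ d\mu_{\mathrm{Haar}}(g).
\]
When $A$ is normal, $w_A$ concentrates on the eigenbasis of $A$ and the inequality is essentially immediate via a permutation/majorisation argument on the $|\lambda_i(A)|^2$. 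The general case would require propagating this comparison across all $A \in GL(d,\R) \setminus \R I$.

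The principal obstacle, and the reason the $c_{d,k}$ of \cite{armentanorandom} degenerates to zero as $d \to \infty$, is that $w_A$ behaves in a genuinely complicated way for highly non-normal $A$, where the Schur subspaces of $UA$ can be supported on delicate lower-dimensional subsets of the Grassmannian. A natural reduction is via polar decomposition $A = QP$: absorbing $Q$ into $U$ reduces to the case $A \succeq 0$ symmetric positive-definite. Even then, $UA$ with $U$ Haar is a non-Hermitian ensemble whose finite-$d$ Schur subspace distribution is not well understood. The most promising avenue I see is to push the hermitisation technique underlying the Single Ring Theorem \cite{guionnet2011single, guionnet2012support} into the finite-$d$ regime via a Lindeberg-type exchange argument, tracking the deterministic finite-$d$ corrections directly rather than taking the $d \to \infty$ limit; alternatively, a Pieri/character-expansion approach, expanding both integrands in $\SO(d)$-Schur polynomials in the singular values of $A$ and verifying a term-by-term dominance, may succeed. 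In either case, showing that no constant is lost at each finite $d$ is essentially the central open problem in the Conjecture.
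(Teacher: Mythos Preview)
The statement you are addressing is a \emph{Conjecture} in the paper, and the paper does not prove it. What the paper actually establishes is only an asymptotic $k=1$ result (Theorem~\ref{concentration} and Corollary~\ref{asymptoticcomplex}): using the Single Ring Theorem together with L\'evy concentration on the sphere, it shows that both $\rho(U_dA_d)$ and $\|A_d v_d\|$ concentrate at the same deterministic value $R_+$ as $d\to\infty$, which yields $c_{d,1}\to 1$ in the limit. There is therefore no full proof in the paper against which to compare your proposal.

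Your proposal is likewise not a proof. You yourself write that ``showing that no constant is lost at each finite $d$ is essentially the central open problem in the Conjecture,'' and neither the Lindeberg-type exchange nor the character expansion is carried out. What you have produced is a research outline; the heart of the argument---controlling the density $w_A$ for general non-normal $A$---is left entirely open.

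Beyond that, two steps in the outline are already problematic. First, over $\SO(d)$ the real Schur form of $UA$ is only \emph{quasi}-triangular, with $2\times2$ blocks for complex-conjugate eigenvalue pairs; when the cut at level $k$ falls inside such a block there is no real $k$-dimensional invariant subspace $V_k$, and the identity $\prod_{i=1}^k|\lambda_i(UA)|^2=\det\big((UAV_k)^*(UAV_k)\big)$ is not available in the form you need. Second, part~(1) does not follow from part~(2) by Jensen. Writing $X=\prod_i|\lambda_i(UA)|$ and $Y=\det A|g_k$, Jensen gives $\E X\ge\exp(\E\log X)$ and $\E Y\ge\exp(\E\log Y)$; both inequalities point the same way, so from $\E\log X\ge\E\log Y$ you obtain only $\E X\ge\exp(\E\log Y)$, which is weaker than the required $\E X\ge\E Y$.
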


All experimental evidence suggests that $c_{d,k} = 1$. Over $\mathrm{SU}(d)$ the full conjecture was proven by Dedieu-Shub \cite{dedieu2003mike}. Recently, Armentano, Chinta, Sahi and Shub \cite{armentanorandom} recently proved the following result towards proving Conjecture \ref{conjecture}. 

\begin{theorem}[
 \cite{armentanorandom}]
Let $G = \SL(d,\R)$,$K = \SO(d)$, and $X = \mathrm{Grass}(d,k)$, the space of real $k$--dimensional Grassmanians of $\R^d$. Then for every $k \in [1,..,d]$ we have that 

$$\int_{SO(d)} \sum_{i=1}^k \log |\lambda_i(OA)| d\nu(O) \geq c_{d,k} \int_X \log \det A | g_{k} \ d\mu(g_{k})$$

 where $c_{d,k} = \frac{1}{\binom{d}{k}}$.
\end{theorem}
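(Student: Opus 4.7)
My strategy is to reduce to diagonal $A$ via $KAK$, expand both sides in the singular values, and compare via an exterior-power reformulation combined with Jensen's inequality. Writing $A = O_1 D O_2$ with $O_i \in \SO(d)$ and $D = \mathrm{diag}(\sigma_1, \ldots, \sigma_d)$, the bi-invariance of Haar measure on $\SO(d)$ and the $\SO(d)$-invariance of the Grassmanian measure $\mu$ reduce both integrals to functions of $\sigma_1,\ldots,\sigma_d$ alone, so we may assume $A = D$.

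For the right-hand side, Cauchy--Binet gives
\[
\det(DU_k)^*(DU_k) = \sum_{|S|=k}\Bigl(\prod_{i\in S}\sigma_i^2\Bigr)\,p_S(U_k),\qquad p_S(U_k) := \lvert\det((U_k)_S)\rvert^2,
\]
where $\sum_S p_S(U_k) = 1$ and $\mathbb{E}_\mu[p_S] = 1/\binom{d}{k}$ by the symmetry of the Grassmanian measure. Concavity of $\log$ then yields
\[
\int_X \log\det(DU_k)^*(DU_k)\,d\mu \le \log\frac{e_k(\sigma_1^2,\ldots,\sigma_d^2)}{\binom{d}{k}},
\]
so it suffices to dominate this quantity on the left-hand side.

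For the left-hand side, rewrite
\[
\sum_{i=1}^k \log|\lambda_i(OA)| = \log\rho\bigl(\Lambda^k(OA)\bigr),
\]
where $\Lambda^k(OA)$ is the $k$-th exterior power acting on $\Lambda^k\R^d$, of dimension $\binom{d}{k}$. The principal $k\times k$ minors $\det((OA)_{S,S})$ appear as the diagonal entries of $\Lambda^k(OA)$ in the Plücker basis, directly matching the terms of the Cauchy--Binet expansion. The key comparison to establish is that $\int_{\SO(d)} \log\rho(\Lambda^k(OA))\,d\nu(O)$ dominates a $1/\binom{d}{k}$-weighted logarithmic average of these diagonal entries, and hence the right-hand side of the theorem.

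The main obstacle is that Horn's inequality $\prod_{i=1}^k|\lambda_i(M)| \le \prod_{i=1}^k \sigma_i(M)$ points in the wrong direction pointwise, so the proof must genuinely exploit the averaging over $\SO(d)$: a lower bound on $\log\rho(N)$ in terms of the log-absolute values of its diagonal entries is false pointwise but becomes true in the mean once $N = \Lambda^k(OA)$ is integrated over Haar measure. I expect the rigorous implementation to proceed via a representation-theoretic identity on $\Lambda^k\R^d$---either the Harish--Chandra spherical function integral on $\SL(d,\R)/\SO(d)$, or a direct Weyl-integration/character calculation for the exterior-power representation of $\SO(d)$---with the constant $1/\binom{d}{k} = 1/\dim\Lambda^k\R^d$ emerging as the normalization of that identity.
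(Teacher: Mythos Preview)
The paper does not prove this theorem; it is quoted as background from \cite{armentanorandom}, so there is no in-paper proof to compare your proposal against.

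On its own terms, your outline leaves the decisive step open. The $KAK$ reduction and the Cauchy--Binet/Jensen upper bound on the Grassmannian integral are fine, as is the rewriting $\sum_{i\le k}\log|\lambda_i(OA)|=\log\rho(\Lambda^k(OA))$. But two issues remain. First, replacing the right-hand side by the Jensen bound $\log\bigl(e_k(\sigma^2)/\binom{d}{k}\bigr)$ means you are now aiming at a \emph{stronger} inequality than the theorem asserts, and you have not verified that this strengthened target is even true. Second, and more importantly, the whole substance of the result is a \emph{lower} bound on $\int_{\SO(d)}\log\rho(\Lambda^k(OA))\,d\nu(O)$, and here you only write ``I expect the rigorous implementation to proceed via a representation-theoretic identity \ldots\ or a Weyl-integration/character calculation.'' The Harish--Chandra integral evaluates $\int_K e^{\langle\lambda,\,H(kg)\rangle}\,dk$, and character formulas evaluate traces; neither produces $\int_K\log\rho(\pi(kg))\,dk$ for $\pi=\Lambda^k$, and you supply no bridge from one to the other. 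As written, the proposal identifies the difficulty correctly but does not overcome it.
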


When $k=1$, Rivin \cite{rivin2005CMP} also proved an inequality with a weaker constant. In particular, for both \cite{armentanorandom} and \cite{rivin2005CMP}, for every $k$, as $d$ increases, their $c_{d,k} \to 0$. Using the Single Ring Theorem \cite{guionnet2011single}, \cite{rudelson2014invertibility}, and \cite{guionnet2012support},  we prove that when $k = 1$ the constant $c_{d,1}$ tends to $1$ ``asymptotically'' in both the real and complex cases. Furthermore, in the complex case, we get equality. 

\subsection{Statement}
The \emph{empirical singular value decompositon}  of a $d\times d$ matrix $A$ is the probability measure $\frac{1}{d} \sum \delta_{\sigma_i(A)}$ where $\sigma_i$ denotes the $i$th largest singular value. 
% While we do not use Dedieu--Shub measures for the Dedieu--Shub conjectures over $GL(d,\R)$, we are able to prove conjectures 1, 2, and 3 \textit{asymptotically} using a simple application of the \textit{Single Ring Theorem} \cite{guionnet2011single} \cite{rudelson2014invertibility}. Furthermore, as an added bonus of the power of the Single Ring theorem, the $\geq$ in the conjecture in fact becomes $=$. Given a matrix $d \times d$ matrix $A$, the \textit{empirical singular value decomposition of } $A$ is $$S(A) = \frac{1}{d} \sum \limits_{i=1}^d \delta_{\sigma_i(A)}, $$ where $\sigma_i(A)$ are the singular values. With that notation, we have the following. 
%Denote the \textit{Stieltjes transform} of a Borel measure $\mu$ on $\R$ as $$\cS_\mu(z) = \int_\R \frac{d\mu(x)}{z - x}.$$ 

In the complex case, we have the following. 
\begin{theorem}
\label{concentration}
    Let $\mu_\sigma$ be a compactly supported probability measure on $\R_+$ whose support contains more than one point. Let $(A_d)_{d\in \N}$ be a sequence of $d \times d$ complex matrices such that for each $d \in \N$ \begin{enumerate}
        \item[-] there exists $C > 1$ such that for $C^{-1} \le \|A_d^{-1}\|^{-1} \le \|A_d\| \le C$ for each $d$ and
        \item[-] The empirical singular value distribution of $A_d$ converges to $\mu_\sigma$ in the weak star topology. 
    \end{enumerate}   Let $$R_+ := \left( \int x^2 d\mu_\sigma(x) \right)^{1/2}$$. Then the following statements hold:
\begin{enumerate}
\item[\textit{(1)}] Let $(U_d)$ be a sequence of random matrices, where $U_d$ is uniformly distributed in compact group $K = \mathrm{SO}(d)$ or $\mathrm{SU}(d)$. Then the sequence of random variables $\rho(U_d A_d )$ converges to $R_+$ in probability, where $\rho$ denotes the spectral radius. 
\item[\textit{(2)}] Let $(v_d)$ be a sequence of independent points, where $v_d$ is uniformly distributed in tbe unit sphere $S^{2d-1}$. Then the sequence of random variables $\|A_d v_d\|$ converges to $R_+$ in probability.
\end{enumerate}
\end{theorem}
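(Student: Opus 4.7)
The plan is to treat the two parts with largely independent arguments. Part (2) is a classical concentration-of-measure statement on the sphere, while Part (1) is where the Single Ring machinery enters essentially.

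\textbf{Part (2).} I would first compute the expected value. For $v_d$ uniformly distributed on $S^{2d-1}$,
\[
\E\|A_d v_d\|^2 = \E\, v_d^* A_d^* A_d v_d = \tfrac{1}{d}\mathrm{tr}(A_d^* A_d) = \int x^2 \, d\mu_d(x),
\]
where $\mu_d$ is the empirical singular value distribution of $A_d$. The uniform bound $\|A_d\| \leq C$ together with weak-$*$ convergence $\mu_d \to \mu_\sigma$ ensures this quantity converges to $R_+^2$. The map $v \mapsto \|A_d v\|$ is $\|A_d\|$-Lipschitz, hence uniformly $C$-Lipschitz in $d$, so Levy's concentration inequality on $S^{2d-1}$ yields Gaussian-type concentration around the mean at scale $d^{-1/2}$. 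Combining the two facts gives convergence in probability of $\|A_d v_d\|$ to $R_+$.

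\textbf{Part (1): reduction.} The plan is to recognize the spectrum of $U_d A_d$ as that of the bi-unitarily invariant ensemble $U \Sigma_d V$ appearing in the Single Ring Theorem. Haar on $\SU(d)$ (resp.\ $\SO(d)$) differs from Haar on $U(d)$ (resp.\ $O(d)$) only by an independent uniform phase on the determinant, and the spectral radius is invariant under scalar phase multiplication, so I may work with the full group. Taking the SVD $A_d = Q_d \Sigma_d V_d^*$, I absorb $Q_d$ into $U_d$ by left-invariance of Haar and conjugate by $V_d$ to see that the eigenvalues of $U_d A_d$ coincide with those of $W_d \Sigma_d$, where $W_d := V_d^* U_d Q_d$ is again Haar by bi-invariance. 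Since for any factorization $W_d = V U$ with $U, V$ independent Haar one has equality of eigenvalues between $W_d \Sigma_d$ and $U \Sigma_d V$ by the cyclic property, this is the canonical single-ring setup.

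\textbf{Part (1): conclusion and main obstacle.} By the Single Ring Theorem \cite{guionnet2011single}, the empirical spectral distribution of $U \Sigma_d V$ converges weakly to a rotationally symmetric probability measure supported exactly on the annulus $\{R_- \leq |z| \leq R_+\}$. The lower bound $\rho(U_d A_d) \geq R_+ - \epsilon$ with probability tending to $1$ is then immediate, since a positive fraction of the $d$ eigenvalues lie within $\epsilon$ of the outer edge. The main obstacle is the matching upper bound, which does \emph{not} follow from weak convergence alone, since a sub-linear number of eigenvalues could in principle escape the annulus. Ruling out such outliers is precisely the content of the no-outliers (support) theorem of \cite{guionnet2012support}, whose proof uses the invertibility estimates of \cite{rudelson2014invertibility} in an essential way; I would invoke these as black boxes. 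The genuine technical depth of Part (1) is therefore entirely deferred to those cited results.
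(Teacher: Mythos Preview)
Your proposal is correct and follows the same route as the paper: Part~(2) via L\'evy's concentration inequality on the sphere after computing $\E\|A_d v_d\|^2=\tfrac{1}{d}\,\mathrm{tr}(A_d^*A_d)\to R_+^2$, and Part~(1) by splitting $|\rho(U_dA_d)-R_+|>\epsilon$ into a lower tail handled by weak convergence to $\mu_{SR}$ (positive mass near the outer edge forces $\rho\ge R_+-\epsilon$) and an upper tail handled by the Guionnet--Zeitouni support/no-outliers theorem as a black box. Your explicit reduction from $\mathrm{SU}(d)/\mathrm{SO}(d)$ to $U(d)/O(d)$ and the SVD normalization are points the paper glosses over, but otherwise the arguments coincide.
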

This is proven in Section \ref{hello}, where the first result uses the \emph{Single Ring Theorem} and sequel results \cite{guionnet2011single}, \cite{rudelson2014invertibility}, \cite{localsrt}, \cite{guionnet2012support} and the second uses concentration of measure, particularly Lévy's inequality.

\begin{corollary}[Asymptotic Dedieu--Shub over $\mathrm{SU}(d)$ and $\mathrm{SO}(d)$] \label{asymptoticcomplex}
% Let $\mu_\sigma$ be a compactly supported probability measure on $\R$. Suppose $A_d$ is a sequence of $d \times d$ matrices, so that the empirical singular value distribution of $A_d$ converges  to $\mu_\sigma$. Then, for every fixed $k$. 
% \begin{enumerate}
%     \item 
% $$\lim_{d \to \infty} \int_{SO(d)} \prod_{i=1}^k |\lambda_i(OA)| \ d \nu(O) = \lim_{d \to \infty}  \int_{\text{Grass}(k,d)}  \det A | g_{n,k} \ d\mu(g_{d,k}) \text{ and,}$$
% \item 
% $$\lim_{d \to \infty} \int_{SO(d)} \sum_{i=1}^k \log |\lambda_i(OA)| d\nu(O) =  \lim_{d \to \infty}  \int_{\text{Grass}(k,d)}   \log \det A | g_{n,k} \ d\mu(g_{d,k}).$$
% \end{enumerate}
Suppose the same hypothesis as Theorem \ref{concentration}. Let $K = \O(d)$ or $\U(d)$. Let $\mu_\sigma$ be a compactly supported probability measure on $\R$. Suppose $A_d$ is a sequence of $d \times d$ matrices, so that the empirical singular value distribution of $A_d$ converges  to $\mu_\sigma$. Then
\begin{enumerate}
    \item 
    \begin{align*}
        \lim_{d \to \infty} \int_{K} |\lambda_1 (UA_d)| \ d \nu(U) &= \lim_{d \to \infty}  \int_{S^{2d-1}}  \|A_dv\| \ d\mu(v) = R_+
    \end{align*} and
    
\item 
\begin{align*}
        \lim_{d \to \infty} \int_{
    K} \log |\lambda_1 (UA_d)| \ d \nu(U) &= \lim_{d \to \infty}  \int_{S^{2d-1}} \log \|A_dv\| \ d\mu(v)= R_+
    \end{align*}
\end{enumerate}
and both limits exist. 
\end{corollary}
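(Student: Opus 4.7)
The plan is to promote the convergence in probability supplied by Theorem \ref{concentration} to convergence of expectations via the bounded convergence theorem. The observation driving everything is that each of the four random variables appearing in the corollary is uniformly bounded in $d$ (and in the randomness $U$ or $v$), so in-probability convergence to the deterministic limit $R_+$ automatically upgrades to $L^1$ convergence of the means.

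To make this concrete, first note that for any unit vector $v$,
\[
C^{-1} \le \|A_d^{-1}\|^{-1} \le \|A_d v\| \le \|A_d\| \le C,
\]
so both $\|A_d v\|$ and $\log \|A_d v\|$ lie in a compact subinterval of $(0,\infty)$ that does not depend on $d$. For the spectral radius, the upper bound $|\lambda_1(UA_d)| \le \|UA_d\| = \|A_d\| \le C$ follows from the unitary invariance of the operator norm. For the lower bound, if $\lambda$ is any eigenvalue of $UA_d$ with unit eigenvector $w$, then $w = \lambda^{-1}(UA_d)^{-1} w$ forces $|\lambda| \ge 1/\|(UA_d)^{-1}\| = 1/\|A_d^{-1}\| \ge C^{-1}$. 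Therefore $\log |\lambda_1(UA_d)|$ lies in $[-\log C, \log C]$ for all $d$ and all $U \in K$.

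With these uniform bounds in hand, Theorem \ref{concentration} together with the continuous mapping theorem gives convergence in probability of each of the four random variables to $R_+$ (or to $\log R_+$ in the two logarithmic cases), and the bounded convergence theorem then upgrades this to convergence of expectations. Consequently all four limits in the corollary exist and the two on each row agree.

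The only nontrivial input beyond Theorem \ref{concentration} itself is the uniform lower bound on $|\lambda_1(UA_d)|$, which is required so that $\log |\lambda_1(UA_d)|$ does not diverge. This is the point at which the invertibility hypothesis $\|A_d^{-1}\| \le C$ plays an essential role; everything else is a routine packaging of the in-probability statement of Theorem \ref{concentration}.
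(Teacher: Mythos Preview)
Your argument is correct and is essentially the approach the paper has in mind: the paper does not write out a separate proof of this corollary, treating it as an immediate consequence of Theorem~\ref{concentration} together with the uniform bounds $C^{-1}\le \sigma_d(A_d)\le \sigma_1(A_d)\le C$ (the sphere side is recorded as Corollary~\ref{quadratic_mean}, also stated without proof). Your use of bounded convergence, together with the eigenvalue bound $C^{-1}\le |\lambda_1(UA_d)|\le C$ that you derive, is exactly the missing step, and you correctly note that the logarithmic limits should be $\log R_+$ rather than $R_+$ as printed.
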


% As \cite{guionnet2012support} over $\GL(d,\R)$ and $\SO(d)$ is unknown, we can only conclude an inequality. 

% \begin{theorem}[Asymptotic Dedieu--Shub over $\SO(d)$] \label{asymptotic}
% For each $d$, let $(A_d)_{d\in \N}$ be a sequence of $d \times d$ real matrices. Assume that there exists $C > 1$ such that for $C^{-1} \le \|A_d^{-1}\|^{-1} \le \|A_d\| \le C$ for each $d$. Also, assume that the empirical singular value distribution of $A_d$ converges to some compactly supported probability measure  $\mu_\sigma$ on $\R^+$ with smooth density as $d \to \infty$. Then
% \begin{enumerate}
%     \item 
%     \begin{align*}
%         \liminf_{d \to \infty} \int_{\SO(d)} |\lambda_1 (OA_d)| \ d \nu(O) &\geq \lim_{d \to \infty}  \int_{S^{d-1}}  \|A_dv\| \ d\mu(v) = R_+
%     \end{align*} and
% \item 
% \begin{align*}
%         \liminf_{d \to \infty} \int_{\SO(d)} \log |\lambda_1 (OA_d)| \ d \nu(O) &\geq \lim_{d \to \infty}  \int_{S^{d-1}} \log \|A_dv\| \ d\mu(v)= R_+
%     \end{align*}
% \end{enumerate}
% and both limits exist. 
% \end{theorem}

\subsection{Acknowledgements}

I thank Jairo Bochi, Asaf Katz, and Amie Wilkinson, for helpful comments. We thank Ofer Zeitouni for pointing out that we could significantly improve our result from an earlier preprint.

\section{Asymptotic results: Proof of Theorem \ref{concentration}}
\label{hello}

\subsection{The Single Ring Theorem}

Given a matrix $A$, the \emph{empirical eigenvalue distribution} of $A$ is the probability measure
$$\Lambda_{A} = \frac{1}{d} \sum \limits_{i=0}^{d-1} \delta_{\lambda_i(A)},$$
and the \emph{empirical singular value distribution} of $A$ is the probability measure
$$\Sigma_A = \frac{1}{d} \sum \limits_{i=0}^{d-1} \delta_{\sigma_i(A)}.$$ 

Let $\cP(\C)$ denote the space of probability measures on the Borel $\sigma$-algebra $(\C, \cB(\C)).$ Denote the $\epsilon$-neighborhood of $C \subset \C$ as $C^\epsilon$.  Recall that the \emph{Lévy--Prokhorov distance} is $$d_{LP}(\alpha, \beta) = \inf \left\{\epsilon > 0: \forall C \in \cB(\C), \text{ we have } \begin{cases}
    \beta(C) \leq \alpha(C^\epsilon) + \epsilon \\
    \alpha(C) \leq \beta(C^\epsilon) + \epsilon.
\end{cases} \right\}.$$ A useful fact about the $d_{LP}(\cdot, \cdot)$ is that it induces the weak topology \cite{billingsley2013convergence}. This means, $\nu_d \to \mu$ weakly if and only if $\lim \limits_{d \to \infty} d_{LP}(\nu_d, \mu) = 0.$

The Single Ring Theorem of Guionnet--Krishnapur--Zeitouni is as follows. 

\begin{theorem}[\cite{guionnet2011single},\cite{rudelson2014invertibility},\cite{localsrt}]
    Let $\mu_\sigma$ be a probability measure on $\R^+$, supported on more than one point, with the property that there exists a constant $C > 0$ such that for every $x$ in the support of $\mu_\sigma$, $$C^{-1} < x < C.$$ 

    Let $A_d$ be a sequence of $d \times d$ matrices such that \begin{enumerate}
        \item[i.] For every $d$ and $k \in \{1,...,d\}$, $C^{-1} < \sigma_k(A_d) < C$ and 
        \item[ii.] $d_{LP}(\Sigma_{A_d}, \mu_\sigma) \to 0 \text{ as } d \to \infty$.
    \end{enumerate} 
    Let $U_d$ be a sequence of independent Haar distributed matrices inside of $\U(d)$ or $\O(d)$. 

    Then there exists a probability measure $\mu_{SR}$ such that $$\Lambda_{U_dA_d} \to \mu_{SR}$$ weakly in probability. In particular, for all $\epsilon, \delta > 0$, there exists $N > 0$ such that for all $d > N$, we have \begin{equation}
  \nu(U_d \in \U(d): d_{LP}(\Lambda_{U_d A_d }, \mu_{SR}) > \epsilon) < \delta. \label{condition}
 \end{equation}

    Furthermore, $\mu_{SR}$ is a probability measure on $\C$ whose support is an annulus with inner radius $R_-$ and outer radius $R_+$, where $R_-$ and $R_+$ are given by $$R_+ =\left( \lim_{d \to \infty} \int_0^{\infty} x^2 d\mu_\sigma(x) \right) ^{1/2}, \ \ \ R_- = \left( \lim_{d \to \infty} \int_0^{\infty} x^{-2} d\mu_\sigma(x) \right) ^{-1/2}.$$ In addition, the densities of $\mu_{SR}$ are smooth and rotationally invariant.

Finallly, the Single Ring Theorem holds over $\O(d)$, except the Single Ring is no longer rotationally invariant. 
\end{theorem}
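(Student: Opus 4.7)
The plan is to follow the Guionnet--Krishnapur--Zeitouni strategy as subsequently sharpened by Rudelson--Vershynin and by the local-law refinements of \cite{localsrt}, combining Girko's Hermitization with free-probabilistic subordination and a quantitative lower bound on the smallest singular value. The rotational invariance in the unitary case I would dispose of first: since $e^{i\theta}U_d$ has the same Haar distribution as $U_d$ and the eigenvalues of $e^{i\theta}U_d A_d$ are $e^{i\theta}$ times those of $U_d A_d$, any weak limit $\mu_{SR}$ is automatically rotationally symmetric. In the orthogonal case only $U_d \mapsto -U_d$ preserves Haar measure, so one loses full rotational invariance but retains $z \mapsto -z$ symmetry.

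For existence of $\mu_{SR}$ I would invoke Girko's Hermitization: the empirical eigenvalue distribution $\Lambda_{U_d A_d}$ is recovered as $\frac{1}{2\pi}\Delta L_d$ (Laplacian in the distributional sense) from the logarithmic potential
\[
L_d(z) \; := \; \frac{1}{d} \log \bigl|\det(U_d A_d - zI)\bigr| \; = \; \int_0^\infty \log(x) \, d\nu_z^{(d)}(x),
\]
where $\nu_z^{(d)}$ denotes the empirical singular value distribution of $U_d A_d - zI$. Since $U_d$ is Haar and $A_d$ has deterministic limiting singular value distribution $\mu_\sigma$, the pair $(U_d A_d, zI)$ is asymptotically freely independent in the appropriate tracial sense, and the weak limit $\nu_z$ of $\nu_z^{(d)}$ is identified by the Haagerup--Larsen subordination equation for $|uA - z|$ with $u$ a free Haar unitary and $|A|$ of distribution $\mu_\sigma$. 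Concentration of $L_d(z)$ around its mean is a standard consequence of log-Sobolev / concentration estimates on $\U(d)$ and $\O(d)$.

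The main obstacle, and the principal technical difficulty, is to pass to the limit inside $\int \log x \, d\nu_z^{(d)}(x)$. Since $\log x$ is unbounded as $x \to 0$, weak convergence of $\nu_z^{(d)}$ is insufficient: one needs a quantitative lower bound on $\sigma_{\min}(U_d A_d - zI)$ which holds with high probability and is strong enough to render $\log$ uniformly integrable against $\nu_z^{(d)}$. This is precisely the contribution of \cite{rudelson2014invertibility}, which removes the smoothness hypothesis of the original \cite{guionnet2011single}, and it is sharpened further in \cite{localsrt}. Once this lower bound is established, $L_d(z) \to L_\infty(z)$ in probability for almost every $z$, and setting $\mu_{SR} := \frac{1}{2\pi}\Delta L_\infty$ produces the desired limiting measure.

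Finally, the annular support and the explicit values of $R_\pm$ are read off from the Haagerup--Larsen subordination equations. For $|z|$ large the equations force $L_\infty(z) - \log|z|$ to decay, with the leading correction controlled by $\int x^2 \, d\mu_\sigma(x)$, pinning down $R_+ = \bigl(\int x^2 \, d\mu_\sigma\bigr)^{1/2}$ as the outer radius where $\Delta L_\infty$ vanishes. A dual analysis using $A_d^{-1}$, which is available thanks to the uniform lower bound $\|A_d^{-1}\|^{-1} \ge C^{-1}$, pins down $R_- = \bigl(\int x^{-2} \, d\mu_\sigma\bigr)^{-1/2}$. Smoothness and rotational invariance of the density on the annulus in the unitary case then follow from the explicit Haagerup--Larsen formula for the Brown measure of $uA$ with $u$ a free Haar unitary; the orthogonal case follows the same scheme with the concentration and small-singular-value inputs adapted to $\O(d)$, at the cost of only $z \mapsto -z$ symmetry.
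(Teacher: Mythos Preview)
The paper does not prove this theorem at all: it is stated as a black-box input, attributed to \cite{guionnet2011single}, \cite{rudelson2014invertibility}, and \cite{localsrt}, and then used in the proof of Theorem~\ref{concentration}. The only commentary the paper offers is the sentence ``The original Single Ring Theorem included two additional assumptions, one removed in \cite{rudelson2014invertibility} and the other removed in \cite{localsrt}.'' There is therefore no proof in the paper to compare your proposal against.

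That said, your outline is a faithful high-level summary of the actual argument in the cited references: Girko Hermitization, asymptotic freeness of $(U_dA_d, zI)$ so that the limiting singular law of $U_dA_d - zI$ is governed by the Haagerup--Larsen subordination equations, the crucial uniform integrability of $\log$ via a quantitative smallest-singular-value bound (the contribution of \cite{rudelson2014invertibility}), and reading off $R_\pm$ from the Brown-measure formula. Your remark on rotational invariance in the unitary case and its failure in the orthogonal case is also correct. So as a sketch of the literature proof your proposal is sound; just be aware that the present paper treats the result as an imported tool rather than something to be proved.
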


The original Single Ring Theorem included two additional assumptions, one removed in \cite{rudelson2014invertibility} and the other removed in \cite{localsrt}.

In a follow up theorem, Guionnet and Zeitouni were able to establish support convergence in the unitary and orthogonal case.

\begin{theorem}[Guionnet and Zeitouni \cite{guionnet2012support}] \label{GZconvergence}
    Let $K = \O(d)$ or $\U(d)$. Let $$SR_\delta = \{z \in \C: |z| \in [R_- -\delta, R_+ + \delta]\}.$$
    For any $\epsilon \text{ and } \delta >0$, there exists $ N  >0$ so that for all $d \geq N$, the empirical eigenvalue distribution of $U_dA_d$ is contained in a $\delta$-neighborhood of the Single Ring with probability greater than $1-\epsilon$. This means that $$\nu_d\{ U_d \in K: \mathrm{supp}(\Lambda_{U_dA_d}) \subset SR_\delta\} > 1-\epsilon$$ or equivalently $$\nu_d\{ U_d \in K: \mathrm{supp}(\Lambda_{U_dA_d}) 
    \not \subset SR_\delta\} < \epsilon.$$ 
\end{theorem}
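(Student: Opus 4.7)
The plan is to upgrade the weak-convergence statement of the Single Ring Theorem to a ``no escape of eigenvalues'' statement by means of Hermitization combined with a quantitative invertibility estimate. For $z \in \C$ and a $d \times d$ matrix $M$, consider the Hermitization
\[
H_z(M) = \begin{pmatrix} 0 & M - zI \\ M^* - \bar z I & 0 \end{pmatrix},
\]
whose spectrum equals $\{\pm \sigma_i(M - zI)\}_{i=1}^d$. The point $z$ is an eigenvalue of $M$ if and only if $0$ is an eigenvalue of $H_z(M)$. Consequently, to show $\supp(\Lambda_{U_dA_d}) \subset SR_\delta$ with high probability, it suffices to establish a uniform lower bound
\[
\sigma_{\min}(U_dA_d - zI) \geq \eta(\delta) > 0 \quad \text{for all } z \in \C \setminus SR_\delta,
\]
with probability approaching $1$ as $d \to \infty$.

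First I would reduce to a finite net. Since $\|U_dA_d\| \leq \|A_d\| \leq C$, every eigenvalue of $U_dA_d$ lies in the disk $\{|z| \leq C\}$, so the region to control is the compact set $K_\delta := \{|z| \leq C\} \setminus SR_\delta$. The map $z \mapsto \sigma_{\min}(M - zI)$ is $1$-Lipschitz in $z$, so it suffices to establish the lower bound on a polynomially dense net $\cN \subset K_\delta$ and then propagate by continuity.

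Second, for each fixed $z \in \cN$, two estimates need to be combined. The bulk of the singular values of $U_dA_d - zI$ is described, via the Hermitized Stieltjes transform and a free-probabilistic subordination argument, by a deterministic measure $\nu_z$. For $z \in K_\delta$, this limiting $\nu_z$ has a spectral gap at $0$ of size bounded below by $\eta(\delta) > 0$; this is the same input that places $\supp(\mu_{SR})$ inside $[R_-, R_+]$ in the Single Ring Theorem itself. Concentration of the Stieltjes transform on microscopic scales then controls all but the very smallest singular values. The danger that a single singular value dips close to $0$ is ruled out by the quantitative invertibility estimate of \cite{rudelson2014invertibility}, which supplies a polynomial lower bound $\sigma_{\min}(U_dA_d - zI) \geq d^{-\kappa}$ with subexponential failure probability, uniformly in $z \in K_\delta$. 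A union bound over the polynomially many net points finishes the job.

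The main obstacle will be the orthogonal case: unlike on $\U(d)$, the limiting measure $\mu_{SR}$ is not rotationally invariant under Haar measure on $\O(d)$, so the subordination analysis producing $\nu_z$ and the invertibility estimate both require extra care, particularly near the real axis where real eigenvalues can cluster. One must also verify that the polynomial invertibility bound $d^{-\kappa}$ is strong enough to beat the logarithmic losses that appear when passing from the Hermitized Stieltjes transform to $\log \sigma_{\min}(U_dA_d - zI)$, and this is precisely where the quantitative form of \cite{rudelson2014invertibility} is essential and where the bulk of the technical work of \cite{guionnet2012support} resides.
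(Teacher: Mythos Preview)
This theorem is not proved in the paper at all; it is quoted verbatim from \cite{guionnet2012support} and invoked as a black box in the proof of Theorem~\ref{concentration}\textit{(1)} (specifically to handle term~$(ii)$, the event $\rho(U_dA_d) > R_+ + \epsilon$). There is therefore no ``paper's own proof'' to compare your proposal against.

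As a side remark, your Hermitization-plus-net-plus-invertibility outline is a coherent strategy and is closer in spirit to the local-law machinery of \cite{localsrt} than to what Guionnet and Zeitouni actually do in \cite{guionnet2012support}; their argument proceeds instead through Schwinger--Dyson (loop) equations and moment bounds on traces of high powers of $U_dA_d$ and its adjoint, rather than through pointwise control of $\sigma_{\min}(U_dA_d - zI)$ over a net. But evaluating that reconstruction is outside the scope of reviewing the present paper, which simply cites the result.
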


In fact, Guionnet and Zeitouni proved a stronger result about almost sure convergence, but we do not need it.

\subsection{Proof of Theorem \ref{concentration}}

Concentration of measure says, roughly, that Lipschitz functions on high dimensional spheres are essentially constant. 

\begin{theorem}[Lévy's inequality for the median]  For Lipschitz $f:S^{d-1} \to \R$ with Lipschitz constant $L$, we have 
    $$\P_d(x \in S^{d-1}: |f(x) - M| \geq \epsilon) \leq 2 \exp \left( - \frac{cd\epsilon^2}{L^2}\right)$$ for some constant $c$ and $M$ is the median satisfying $$\P_d(f(x) \geq M) = 1/2 \text{ and } \P_d(f(x) \leq M) = 1/2.$$
\end{theorem}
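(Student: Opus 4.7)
The plan is to derive the inequality from the Lévy--Schmidt isoperimetric inequality on $S^{d-1}$. First I would reduce to the case $L=1$ by replacing $f$ with $f/L$, which preserves the median and rescales the deviation threshold to $\epsilon/L$. The key input is the classical fact that among all Borel sets $A \subset S^{d-1}$ of prescribed measure, the geodesic $\epsilon$-neighborhood $A_\epsilon$ has minimal measure when $A$ is a spherical cap.

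Let $A = \{x \in S^{d-1} : f(x) \le M\}$ and $B = \{x \in S^{d-1} : f(x) \ge M\}$. By definition of the median, both sets have measure at least $1/2$. Since $f$ is $1$-Lipschitz, any point within geodesic distance $\epsilon$ of $A$ satisfies $f \le M + \epsilon$, so $\{f \le M + \epsilon\} \supseteq A_\epsilon$; symmetrically $\{f \ge M - \epsilon\} \supseteq B_\epsilon$. Therefore
\begin{equation*}
\P_d\bigl(|f - M| \ge \epsilon\bigr) \le \P_d(A_\epsilon^c) + \P_d(B_\epsilon^c).
\end{equation*}

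Applying the isoperimetric inequality, I would compare $A_\epsilon$ and $B_\epsilon$ to the $\epsilon$-thickening $H_\epsilon$ of a closed hemisphere $H$, whose complement is a spherical cap of angular radius $\pi/2 - \epsilon$. The measure of this cap has the explicit beta-integral form proportional to $\int_0^{\sin \epsilon}(1-t^2)^{(d-3)/2}\,dt$, and using the elementary bound $(1-t^2)^{(d-3)/2} \le e^{-(d-3)t^2/2}$ together with a standard estimate of the normalizing constant yields a bound of the form $\exp(-c d \epsilon^2)$ for an absolute constant $c$. Combining the two sides via a union bound gives $\P_d(|f-M|\ge \epsilon) \le 2\exp(-cd\epsilon^2)$, and the general Lipschitz constant $L$ is restored by the scaling $\epsilon \mapsto \epsilon/L$.

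The principal obstacle is the spherical isoperimetric inequality itself, which is the nontrivial ingredient and is traditionally established via two-point symmetrization or spherical rearrangement; I would invoke it as a black box. The remaining ingredients--the Lipschitz containment, the hemisphere comparison, and the explicit cap-measure bound with the correct dimensional dependence in the exponent--are short analytic computations whose main delicacy is tracking the factor of $d$ in the exponent correctly so that the inequality has the stated Gaussian-type tail rather than a weaker rate.
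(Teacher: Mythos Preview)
Your sketch is the standard derivation via the L\'evy--Schmidt spherical isoperimetric inequality and is correct as outlined. The paper, however, does not give its own proof of this statement at all: it simply cites \cite{ledoux2001concentration} and \cite{vershynin2018high} and moves on, so there is nothing to compare against beyond noting that your argument is precisely the textbook one found in those references.
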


\begin{corollary}
    \label{levy}

     For Lipschitz $f:S^{d-1} \to \R$ with Lipschitz constant $L$, we have 
    $$\P_d(x \in S^{d-1}: |f(x) - \E f| \geq \epsilon) \leq 2 \exp \left( - \frac{cd\epsilon^2}{L^2}\right)$$ for some constant $c$ an where $$\E f = \int f(x) \ dx,$$ where $\P_d$ is the normalized surface area on $S^{d-1}.$
\end{corollary}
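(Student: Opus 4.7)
The plan is to deduce this standard corollary from the median version of Lévy's inequality by showing that the median $M$ and the mean $\E f$ differ by at most $O(L/\sqrt{d})$, and then applying the triangle inequality.

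First, I would control $|M - \E f|$ by integrating the tail bound. Since $|M - \E f| \le \E|f - M|$, and $\E|f - M| = \int_0^\infty \P_d(|f - M| > t)\, dt$, the median version of Lévy's inequality gives
\[
|M - \E f| \;\le\; \int_0^\infty 2 \exp\!\left(-\tfrac{cdt^2}{L^2}\right) dt \;=\; \frac{L}{\sqrt{d}} \cdot \sqrt{\tfrac{\pi}{c}} \;=:\; \frac{C_0 L}{\sqrt{d}}
\]
for an absolute constant $C_0$.

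Next, by the triangle inequality, on the event $\{|f(x) - \E f| \ge \epsilon\}$ we have $|f(x) - M| \ge \epsilon - C_0 L/\sqrt{d}$. I would split into two cases: if $\epsilon \ge 2 C_0 L/\sqrt{d}$, then $|f(x) - M| \ge \epsilon/2$, so by Lévy's median inequality
\[
\P_d\bigl(|f - \E f| \ge \epsilon\bigr) \;\le\; \P_d\bigl(|f - M| \ge \epsilon/2\bigr) \;\le\; 2 \exp\!\left(-\tfrac{cd\epsilon^2}{4L^2}\right),
\]
which is of the desired form after absorbing the factor of $4$ into a new constant $c'$. If instead $\epsilon < 2 C_0 L/\sqrt{d}$, then $d\epsilon^2/L^2 < 4 C_0^2$, so the right-hand side $2\exp(-c' d\epsilon^2 / L^2)$ is bounded below by a fixed positive constant, and by shrinking $c'$ further if necessary we can ensure this lower bound exceeds $1$, making the inequality trivially true.

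There is no real obstacle here: the argument is entirely classical and amounts to bookkeeping of constants. The only thing to be careful about is that the constant $c$ appearing in the conclusion is strictly smaller than the one in the median version; one must state the corollary with a (possibly different) absolute constant $c$.
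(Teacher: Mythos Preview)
Your argument is correct and entirely standard: bounding $|M-\E f|$ via the layer-cake formula and the median tail bound, then splitting on whether $\epsilon$ is large or small relative to $L/\sqrt{d}$, is exactly how one passes from concentration about the median to concentration about the mean. The paper itself does not give any argument for this corollary; it simply refers the reader to the textbooks of Ledoux and Vershynin, where the proof you have written (or a close variant) can be found. So you have supplied the details the paper omits, and there is nothing to correct.
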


\begin{proof}
See \cite{ledoux2001concentration} or \cite{vershynin2018high} for both.
% This is Exercise 5.1.12 in \cite{vershynin2018high} . To sketch it, we have \begin{align*}
%     \E(f(x)) - M &=  \int_{S^{d-1}} f(x) - M \ d\mu(x)  \\
%    & = \int_{f(x) \geq M} (f(x) - M) \ d\mu(x) +  \int_{f(x) < M} (f(x) - M) \ d\mu(x).
% \end{align*}

% The first term can be bounded as 
% $$\int_{f(x) \geq M} f(x) - M \ d\mu(x) \leq \int_0^\infty \exp \left( - \frac{(d-1)t^2}{2L^2}\right) \ dt \leq \frac{L}{\sqrt{d-1}}$$ as can the second term. This gives $$|\E(f(x)) - M| \leq \frac{2L}{\sqrt{d-1}} \to 0 \text{ as } d\to \infty.$$ So the mean and median converge to each other, and the result follows for Lévy's inequality for the median. 
\end{proof}

\begin{proof}[Proof of Theorem \ref{concentration} \textit{(2)}] 
The proof of part \textit{(2)} of Theorem \ref{concentration} uses Levy's inequality for concentration near the mean.  Assume $A_d = \mathrm{diag}(a_1,...,a_d)$. The assumption of the theorem states that each $a_i$ is bounded above and below by some fixed $C,1/C$, for every $d$. Define $$f_d: v \in S^{2d-1} \to \|A_d v\|^2.$$ The maps $v \to f_d(v)$ is $C$--Lipschitz and so $v \to \|Av\|$ is $2C^2$ Lipschitz, for all $d.$
%\begin{align*}
    % \|f_d(u) - f_d(v)\|_2 = \sum \limits_{i=1}^d a_i^2(u_i^2 -v_i^2) 
    % =  \left|\sum \limits_{i=1}^d a_i^2 \cdot 2 (u_i - v_i) \right|  \\
    %\leq \sum \limits_{i=1}^d 2 \cdot \left| \sum \limits_{i=1}^d a_i^4\right|^{1/2} \cdot \left| \sum \limits_{i=1}^d (u_i - v_i)^2\right|^{1/2} 
%     \leq 2 \|A_d\|^2 \|u - v\| \leq 2C^2 \|u - v\|.
% \end{align*} 
% Hence the Lipschitz constant of $f_d$ is bounded by $L := 2C^2$ for all $d$.

   The symmetry of the sphere implies  $\E(v_1^2) = \E(v_2^2) = ... = \E(v_d^2)$. Because expectation is linear, we have $\E(v_1^2) = 1/d$.  Therefore, the expectation of $f_d$ is 
\begin{align*}
    \E f_d &= \int_{S^{d-1}} \|Av\|^2 \ dv = \int_{S^{d-1}} \sum \limits_{i=1}^d a_i^2v_i^2 \ dv = \sum \limits_{i=1}^d a_i^2 \int_{S^{d-1}} v_i^2 \ dv = \frac{1}{d} \sum \limits_{i=1}^d a_i^2. 
\end{align*}

In particular, $$\lim \limits_{d \to \infty} \E f_d = R_+^2,$$ where $R_+ = \sqrt{\int_{\R}x^2 \ d\mu_\sigma(x)}.$

Lévy's inequality asserts that $$\P_d[ |f_d(v) - \E f_d \geq \epsilon ] =  \P_d[ |\|A_dv\|^2 - R_+^2 | \geq \epsilon ]  \leq 2 \exp \left(- \frac{cd\epsilon^2}{L^2}\right). $$
In the limit, we have then that for all $\epsilon > 0$, then 
\begin{align*}
    \lim \limits_{d \to \infty}\P_d[ | \|A_d v\|^2 - R_+^2| \geq \epsilon ] &= \lim \limits_{d \to \infty} \P_d[ | \|A_d v\| - R_+| \cdot | \|A_d v \| + R_+|  \geq \epsilon ] \\
    & \leq \lim \limits_{d \to \infty}\ \P_d \left[|\|A_dv - R_+\| \geq \frac{\epsilon}{C + R_+} \right]\\ 
    & \leq \lim \limits_{d \to \infty} 2 \exp \left( - \frac{cd\epsilon^2}{L^2(C+R_+)^2} \right)\\
    &= 0.
\end{align*}

This implies that 
$$ \lim \limits_{d \to \infty}\P_d[ | \|A_d v\| - R_+\| \geq \epsilon ] = 0.$$

\end{proof}

As a corollary, we get the following.

\begin{corollary} \label{quadratic_mean}
    Let $\mu_\sigma$ be compactly supported. Let $A_d$ be a sequence of matrices such that there exists $C > 0$ so that $$C^{-1} < \|A_d^{-1}\|^{-1} < \|A_d\| < C,$$ where $\|\cdot\|$ is the $\sup$ norm, or the top singular value of $A$. Suppose $\Sigma_{A_d}$ converges weakly to a measure $\mu_\sigma$. We have that \begin{align*}
        \lim \limits_{d \to \infty} \int_{S^{d-1}} \|A_dv\| dv &= \lim \limits_{d \to \infty}  \sqrt{\int_{S^{d-1}} \|A_dv\|^2 dv } \\
        &= \sqrt{\int x^2 \ d\mu_\sigma(x)} = R_+.
    \end{align*}
    Furthermore \begin{align*}
        \lim \limits_{d \to \infty} \int_{S^{d-1}} \log \|A_dv\| dv = \log R_+.
    \end{align*}
\end{corollary}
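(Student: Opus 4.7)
The plan is to deduce this corollary from part \textit{(2)} of Theorem \ref{concentration} by upgrading convergence in probability to convergence in $L^1$ via the uniform boundedness of the integrands.

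First, the hypothesis $C^{-1} < \|A_d^{-1}\|^{-1} \leq \|A_d\| < C$ gives $C^{-1} \leq \|A_d v\| \leq C$ for every unit vector $v \in S^{d-1}$ and every $d$. Combined with Theorem \ref{concentration}\textit{(2)}, which asserts $\|A_d v\| \to R_+$ in probability, the bounded convergence theorem yields $\int_{S^{d-1}} \|A_d v\| \, dv \to R_+$, settling the first equality.

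For the middle equality, the idea is to recycle the calculation from the proof of Theorem \ref{concentration}\textit{(2)}: diagonalizing $A_d = \mathrm{diag}(a_1,\ldots,a_d)$ and exploiting the symmetry of the sphere gives $\int_{S^{d-1}} \|A_d v\|^2 \, dv = \frac{1}{d}\sum_{i=1}^d a_i^2 = \int x^2 \, d\Sigma_{A_d}(x)$. Since all singular values lie in the compact interval $[C^{-1}, C]$ and $x \mapsto x^2$ is continuous, the weak convergence $\Sigma_{A_d} \to \mu_\sigma$ implies $\int x^2 \, d\Sigma_{A_d}(x) \to \int x^2 \, d\mu_\sigma(x) = R_+^2$. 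Taking square roots closes the chain.

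For the logarithmic assertion, apply the continuous mapping theorem to promote $\|A_d v\| \to R_+$ in probability into $\log \|A_d v\| \to \log R_+$ in probability, then observe that $|\log \|A_d v\|| \leq \log C$ uniformly in $d$ and $v$, so bounded convergence once more upgrades this to convergence of expectations to $\log R_+$. No serious obstacle appears: the entire argument rests on the fact that the uniform upper and lower bounds on the singular values make each integrand uniformly bounded, so convergence in probability automatically upgrades to $L^p$ convergence for every finite $p$. The only subtlety worth flagging is that the lower bound $\|A_d^{-1}\|^{-1} \geq C^{-1}$ is essential in the logarithmic case, as it is precisely what prevents the singularity of $\log$ at the origin from spoiling the bounded convergence step.
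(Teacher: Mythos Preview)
Your proposal is correct and matches the paper's intended argument. The paper does not actually write out a proof of this corollary---it simply states ``As a corollary, we get the following'' immediately after the proof of Theorem \ref{concentration}\textit{(2)} and then moves on---so you have supplied precisely the details the paper leaves implicit: convergence in probability from Theorem \ref{concentration}\textit{(2)}, combined with the uniform bounds $C^{-1}\le \|A_d v\|\le C$, upgrades to convergence of expectations (and of $\log$-expectations) by bounded convergence, while the middle equality is just the computation $\E f_d=\frac{1}{d}\sum a_i^2\to R_+^2$ already carried out in the paper's proof of part \textit{(2)}.
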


\begin{proof}[Proof of Theorem \ref{concentration} (1)]

We wish to show for all $\epsilon > 0$, $$\lim_{d \to \infty} \nu_d \{ U_d \in K: |\rho(U_dA_d) - R_+| > \epsilon \} = 0.$$ This is equivalent to showing that $$\lim_{d \to \infty} \nu_d\underbrace{\{U_d: \rho(U_dA_d) < R_+ - \epsilon\}}_{(i)} + \lim \limits_{d \to \infty} \nu_d\underbrace{\{U_d: \rho(U_dA_d) > R_+ + \epsilon\}}_{(ii)} = 0.$$ 

We consider term $(i)$ first. Fix $\epsilon > 0$. We will argue that if $\rho(U_dA_d) < R_+ - \epsilon$ then necessarily $d_{LP}(\Lambda_{U_dA_d},\mu_{SR}) > \epsilon$. This will produce the contradiction as the Single Ring Theorem asserts this does not happen often. 

To this end, let $$S = \text{the annulus of inner radius } R_+ - \epsilon/2, \text{ and outer radius } R_+.$$ We know $\mu_{SR}(S) =: \epsilon' > 0 $ because $\mathrm{supp}(\mu_{SR}) \supseteq  S \supseteq  \mathrm{int}(S) \neq \emptyset.$ Now suppose for a given $d$, we have that for $U \in K$, $\rho(UA_d) < R_+ - \epsilon.$ Then $\Lambda_{UA_d} \subset \text{Disk}(0, R_+ - \epsilon).$ For this $U$, we have \begin{align*}
    d_{LP}(\Lambda_{UA_d}, \mu_{SR})  &= \inf \left\{\epsilon > 0: \forall C \in \cB(\C), \text{ we have } \begin{cases}
    \mu_{SR}(C) \leq \Lambda_{UA_d} (C^\epsilon) + \epsilon \\
    \Lambda_{UA_d}(C) \leq  \mu_{SR}(C^\epsilon) + \epsilon.
\end{cases} \right\} \\
&\geq \min \{\epsilon/2, \epsilon'\} \text{ replacing } C \text{ in the previous line with } S \\
&=: \epsilon''.
\end{align*}
We know by the Single Ring Theorem, that for all $\epsilon'' > 0$, %there exists a large $N$ such that for all $d > N$, we have 
$$\nu_d \{U_d \in K: d_{LP}(\lambda_{U_dA_d},\mu_{SR}) > \epsilon'' \} \to 0 \text{ as } d \to \infty.$$ So necessarily $$\nu_d \{U_d \in K: \rho(U_dA_d) < R_+ - \epsilon \} \to 0 \text{ as } d \to \infty.$$ 

We now treat term $(ii)$ using \cite{guionnet2012support}. Suppose for a given $d$, we have that for $U \in K$, $\rho(UA_d) > R_+ + \epsilon.$ In particular, the support of $\Lambda_{UA_d}$ is not strictly inside an $\epsilon$ neighborhood of the support of the Single Ring. But Theorem \ref{GZconvergence} implies that this does not happen often -- namely for $\delta >0$, there is a large $N$ such that for $d > N$, $$\nu_d\{ U_d \in K: \mathrm{supp}(\Lambda_{U_dA_d}) \not \subset SR_\delta\} = \epsilon.$$ Hence term $(ii)$ also decays to zero.
\end{proof}

% \begin{proof}[Proof of Theorem \ref{asymptotic}]
% The proof is similar to the previous, so we sketch it. 
% Let $A_d$ satisfy the norm bounds and $\Sigma_{A_d} \to \mu_\sigma$. Set $$R_+ = \left( \int x^2 \ d\mu_\sigma(x)\right)^{1/2}.$$
% We know by Corollary \ref{quadratic_mean}
% $$\int_{S^{d-1}} \|A_d v\| d\mu_v \to R_+,\int_{S^{d-1}} \log \|A_d v\| d\mu_v \to \log R_+ .$$
% By the single ring theorem, $$\nu_d\{O:\rho(OA_d) < R_+ - \epsilon\} \to 0, \ \ \nu_d\{O:\log \rho(OA_d) < \log R_+ - \epsilon \} \to 0.$$
% So, 
% $$\int_{\SO(d)} \rho(OA_d) d\nu_d(O) \geq (R_+ - \epsilon)(1-o(1)), \int_{\SO(d)} \log \rho(OA_d) d\nu_d(O) \geq (\log R_+ - \epsilon)(1-o(1)).$$
% \end{proof}

\printbibliography

@inproceedings{burns2001recent,
  title={Recent results about stable ergodicity},
  author={Burns, Keith and Pugh, Charles and Shub, Michael and Wilkinson, Amie},
  booktitle={Proceedings of Symposia in Pure Mathematics},
  volume={69},
  pages={327--366},
  year={2001},
  organization={Providence, RI; American Mathematical Society; 1998}
}

@article{guionnet2011single,
  title={The single ring theorem},
  author={Guionnet, Alice and Krishnapur, Manjunath and Zeitouni, Ofer},
  journal={Annals of mathematics},
  pages={1189--1217},
  year={2011},
  publisher={JSTOR}
}

@article{armentanorandom,
  title={Random and mean Lyapunov exponents for probability measures on $GL(n,\R)$},
  author={Armentano, Diego and Chinta, Gautam and Sahi, Siddhartha and Shub, Michael},
  journal={Ergodic Theory and Dynamical Systems},
  pages={1--17},
  publisher={Cambridge University Press}
}

@article{dedieu2003mike,
  title={On random and mean exponents for unitarily invariant measures on $GL(n,\C)$},
  author={Dedieu, Jean-Pierre and Shub, Michael},
  journal={Asterisque},
  volume={287},
  pages={1--18},
  year={2003}
}

@article{rivin2005CMP,
  title={On some mean matrix inequalites of dynamical interest},
  author={Rivin, Igor},
  journal={Communications in mathematical physics},
  volume={254},
  pages={651--658},
  year={2005},
  publisher={Springer}
}

@article{rudelson2014invertibility,
  title={Invertibility of random matrices: unitary and orthogonal perturbations},
  author={Rudelson, Mark and Vershynin, Roman},
  journal={Journal of the American Mathematical Society},
  volume={27},
  number={2},
  pages={293--338},
  year={2014}
}

@article{guionnet2012support,
  title={Support convergence in the single ring theorem},
  author={Guionnet, Alice and Zeitouni, Ofer},
  journal={Probability Theory and Related Fields},
  volume={154},
  pages={661--675},
  year={2012},
  publisher={Springer}
}

@book{vershynin2018high,
  title={High-dimensional probability: An introduction with applications in data science},
  author={Vershynin, Roman},
  volume={47},
  year={2018},
  publisher={Cambridge university press}
}

@book{billingsley2013convergence,
  title={Convergence of probability measures},
  author={Billingsley, Patrick},
  year={1999},
  publisher={John Wiley \& Sons}
}

@article {localsrt,
    AUTHOR = {Bao, Zhigang and Erdös, Laszlo and Schnelli, Kevin},
     TITLE = {Local single ring theorem on optimal scale},
   JOURNAL = {Ann. Probab.},
  FJOURNAL = {The Annals of Probability},
    VOLUME = {47},
      YEAR = {2019},
    NUMBER = {3},
     PAGES = {1270--1334},
      ISSN = {0091-1798,2168-894X},
   MRCLASS = {46L54 (60B20)},
  MRNUMBER = {3945747},
MRREVIEWER = {Andrzej\ \L uczak},
       DOI = {10.1214/18-AOP1284},
       URL = {https://doi.org/10.1214/18-AOP1284},
}

@book{ledoux2001concentration,
  title={The concentration of measure phenomenon},
  author={Ledoux, Michel},
  number={89},
  year={2001},
  publisher={American Mathematical Soc.}
}

\end{document}